\newtheorem{theorem}{Theorem}[section]
\newtheorem{lemma}[theorem]{Lemma}
\theoremstyle{definition}
\newtheorem{definition}[theorem]{Definition}
\numberwithin{equation}{section}
\title[A pointwise characterization of BV functions on metric spaces]{A pointwise characterization of functions of bounded variation on metric spaces}
\author{Panu Lahti}
\address{Department of Mathematics,
P.O. Box 11100,
FI-00076 Aalto University, Finland}
\email{panu.lahti@aalto.fi}
\author{Heli Tuominen}
\address{Department of Mathematics and Statistics,
P.O. Box 35 (MaD),
FI-40014 University of Jyv\"askyl\"a, Finland}
\email{heli.m.tuominen@jyu.fi}
\date{\today}
\newcommand\rn{\mathbb R^n}
\newcommand\z{\mathbb Z}
\newcommand\grad{\nabla}
\newcommand\ph{\varphi}
\newcommand\M{\operatorname{\mathcal M}}
\newcommand\diam{\operatorname{diam}}
\newcommand\dist{\operatorname{d}}
\providecommand{\sob}[1]{W^{1,p}({#1})}
\providecommand{\BV}[1]{BV({#1})}
\providecommand{\lloc}[1]{L^1_{\text{loc}}(#1)}
\providecommand{\ch}[1]{\text{\raise 2pt \hbox{$\chi$}\kern-0.2pt}_{#1}}
\providecommand{\vint}[1]{\mathchoice
          {\mathop{\vrule width 5pt height 3 pt depth -2.5pt
                  \kern -9pt \kern 1pt\intop}\nolimits_{\kern -5pt{#1}}}%
          {\mathop{\vrule width 5pt height 3 pt depth -2.6pt
                  \kern -6pt \intop}\nolimits_{\kern -3pt{#1}}}%
          {\mathop{\vrule width 5pt height 3 pt depth -2.6pt
                  \kern -6pt \intop}\nolimits_{\kern -3pt{#1}}}%
          {\mathop{\vrule width 5pt height 3 pt depth -2.6pt
                  \kern -6pt \intop}\nolimits_{\kern -3pt{#1}}}}
\begin{document}

\thanks{Part of this research was conducted during the visit of the second
author to Forschungsinstitut f\"ur Mathematik of ETH Z\"urich,  and she wishes to 
thank the institute for the kind hospitality.
The second author was supported by
the Academy of Finland, grant no.\ 135561.}

\begin{abstract}
We give a new characterization of the space of functions of bounded variation in terms of a pointwise inequality connected to the maximal function of a measure. The characterization is new even in Euclidean spaces and it holds also in general metric spaces. 
\end{abstract}

\subjclass[2010]{46E35, 26B30, 28A12}
\maketitle

\section{Introduction}
There are several equivalent definitions for functions of bounded variation in Euclidean spaces. Two of the most common, which we recall below, cannot be generalized to metric spaces because they make use of smooth functions and (weak) derivatives. An integrable function $u$ is a function of bounded variation, $u\in BV(\rn)$, if 
\[
\|Du\|(\rn)=
\sup\Big\{\int_{\rn}u\operatorname{div} \ph\,dx:
\ph\in C^{1}_{c}(\rn,\rn), \|\ph\|_{\infty}\le 1 \Big\}<\infty,
\]
or, equivalently, if there exist real finite measures $\mu_{1},\dots,\mu_{n}$ such that 
\[
\int_{\rn}uD_{i}\ph\,dx=-\int_{\rn}\ph\,d\mu_{i}
\text{ for all }\ph\in  C^{1}_{c}(\rn),\; i=1,\dots,n,
\]
that is, the weak gradient $Du=\mu$ of $u$ is an $\rn$-valued measure with finite total variation $|Du|(\rn)$. The above definitions are equivalent to the following one, based on a relaxation procedure using Lipschitz functions. A function $u\in L^{1}(\rn)$ belongs to $BV(\rn)$, if 
\begin{equation}\label{BV relaxation rn}
L(u)=
\inf\Big\{\liminf_{i\to\infty}\int_{\rn}|\grad u_{i}|\,dx:
u_{i}\in \operatorname{Lip}(\rn),\, u_{i}\to u\text{ in }L^{1}(\rn) \Big\}<\infty.
\end{equation}
The definition given by \eqref{BV relaxation rn} has been generalized to a metric measure space by using the local Lipschitz constant \eqref{lip} in the place of the gradient by Ambrosio in \cite{A1} and Miranda in \cite{Mi}. This definition together with the doubling property of the measure and the validity of a $(1,1)$-Poincar\'e inequality provides a rich theory of functions of bounded variation in metric spaces, see for example 
\cite{A1}, \cite{A}, \cite{AMP}, \cite{Cam}, \cite{HakKi}, \cite{KKST1}, \cite{KKST3}, \cite{KKST2}, \cite{Mi}.
Properties of  $BV$ functions in $\rn$ can be studied from the monographs \cite{AFP} (contains a historical overview in Section 3.12), \cite{EG}, \cite{F}, \cite{Gi}, \cite{Z}.  

In this paper, motivated by the characterization of the Sobolev space $W^{1,1}(\rn)$ 
given by Haj\l asz in \cite{Hj3},  we give a new characterization of functions of bounded variation in metric spaces using a pointwise estimate. 
The characterization is  new even in the classical setting. For the notation and definitions used in the introduction and throughout the paper, see Section \ref{preliminaries}.

Before giving the characterization, we recall the inequality behind the
Sobolev spaces $M^{1,p}(X)$, where $X=(X,\dist,\mu)$ is a metric measure space.
For $1<p<\infty$, the function $u\in L^p(\rn)$ belongs to $\sob{\rn}$ if
and only if there is a function $0\le g\in L^p(\rn)$ such that the
pointwise inequality 
\begin{equation}\label{m1p rn}
|u(x)-u(y)|\le|x-y|\bigl(g(x)+g(y)\bigr)
\end{equation}
holds for almost all $x,y\in\rn$, see \cite{Hj1}. The
validity of \eqref{m1p rn} for $u\in\sob{\rn}$ follows from the inequality
\begin{equation}\label{u-u M}
|u(x)-u(y)|\le C(n)|x-y|\bigl[\M_{2|x-y|}|\grad u|(x)+
\M_{2|x-y|}|\grad u|(y)\bigr]
\end{equation}
for almost all $x,y\in\rn$, which holds for all $1\le p<\infty$, and the $L^p$-boundedness of the Hardy-Littlewood maximal operator $\M$ for $p>1$, see for example \cite{Hj1}. 
The boundedness is essential; for a function $u\in W^{1,1}(\rn)$ there is not
necessarily any integrable function $g$ such that inequality
\eqref{m1p rn} holds, see \cite{Hj3}. In \cite{Hj3}, Haj\l asz gave
the following characterization of $W^{1,1}(\rn)$ using a pointwise
estimate with maximal functions on its right-hand side. 

\begin{theorem}[{\cite[Theorem 4]{Hj3}}]\label{w11 pointwise}
Let $u\in L^1(\rn)$. Then $u\in W^{1,1}(\rn)$ if and only if there
exists a function $0\le g\in L^1(\rn)$ and a constant
$\sigma\ge1$ such that the pointwise inequality 
\begin{equation}\label{w11}
|u(x)-u(y)|\le|x-y|\bigl[\M_{\sigma|x-y|}g(x)+\M_{\sigma|x-y|}g(y)\bigr]
\end{equation}
holds for almost all $x,y\in\rn$.    
\end{theorem}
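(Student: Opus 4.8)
The plan is to prove both implications of the characterization, treating the two directions quite differently.

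For the forward direction, suppose $u\in W^{1,1}(\rn)$. Then I can simply invoke the known pointwise bound \eqref{u-u M}, namely
\[
|u(x)-u(y)|\le C(n)|x-y|\bigl[\M_{2|x-y|}|\grad u|(x)+\M_{2|x-y|}|\grad u|(y)\bigr]
\]
for almost every $x,y\in\rn$. Setting $g=C(n)|\grad u|$ (so that $g\in L^1(\rn)$ since $u\in W^{1,1}$) and $\sigma=2$, inequality \eqref{w11} holds immediately. This direction is essentially free, being a restatement of a standard estimate.

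The converse is the substantive part: assuming \eqref{w11} holds for some $0\le g\in L^1(\rn)$ and $\sigma\ge 1$, I must show $u\in W^{1,1}(\rn)$. My strategy is to produce a competitor sequence realizing the relaxation functional, or equivalently to bound the total variation. The natural approach is to mollify: let $u_\eps=u*\ph_\eps$ for a standard mollifier $\ph_\eps$, and estimate $|\grad u_\eps|$ using \eqref{w11}. Writing $\grad u_\eps(x)=\int u(x-z)\grad\ph_\eps(z)\,dz$ and using $\int\grad\ph_\eps=0$ to symmetrize into difference quotients, I expect to bound $|\grad u_\eps(x)|$ by roughly $\eps^{-1}\int_{|z|\le\eps}|u(x-z)-u(x)|\,dz$. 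Feeding in \eqref{w11} should control this by an average of $\M_{\sigma\eps}g$-type terms near $x$, and the key point is that integrating in $x$ and using the $L^1$-mass-preserving nature of truncated maximal averages (rather than the unbounded maximal operator) keeps everything controlled by $\|g\|_{L^1}$ uniformly in $\eps$. Thus $\liminf_{\eps\to0}\int_{\rn}|\grad u_\eps|\,dx\le C\|g\|_{L^1}<\infty$, and since $u_\eps\to u$ in $L^1$, the relaxation functional \eqref{BV relaxation rn} is finite, giving $u\in BV(\rn)$; upgrading from $BV$ to $W^{1,1}$ requires verifying the variation measure is absolutely continuous, which follows because the bound is by a fixed $L^1$ function $g$ rather than merely a finite measure.

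The main obstacle I anticipate is exactly this last point: \eqref{w11} as stated controls differences by truncated maximal functions $\M_{\sigma|x-y|}g$, and one must avoid invoking $L^1$-boundedness of the maximal operator (which fails). The delicate estimate is showing that the \emph{restricted} maximal averages at scale comparable to $\eps$, after integration against the mollifier and then over $\rn$, reproduce a bound by $\|g\|_{L^1}$ with a dimensional constant independent of $\eps$ — this is where Haj\l asz's argument in \cite{Hj3} does its real work, likely by a covering or Fubini argument exploiting that each truncated average $\M_{r}g(x)$ only sees $g$ on a ball of controlled radius. A secondary subtlety is distinguishing the conclusion $u\in W^{1,1}$ from the weaker $u\in BV$: the pointwise control by a genuine $L^1$ density $g$, as opposed to a measure, is what forces absolute continuity of $Du$ and hence membership in the Sobolev space.
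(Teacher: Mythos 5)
The forward direction of your argument is fine: \eqref{u-u M} with $g=C(n)|\grad u|\in L^1(\rn)$ and $\sigma=2$ gives \eqref{w11} at once. (Note that the paper does not actually prove Theorem \ref{w11 pointwise} --- it is quoted from \cite{Hj3} --- but the metric-space analogue of the converse is exactly Theorem \ref{poincare from pointwise}, whose proof is the model you should be comparing against.)

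The converse as you outline it has a genuine gap at precisely the step you flag as delicate, and the mechanism you propose to close it does not work. After mollifying and symmetrizing you get
\[
|\grad u_\eps(x)|\le \frac{C}{\eps}\,\vint{B(x,\eps)}|u(y)-u(x)|\,dy
\le C\Bigl[\M_{\sigma\eps}g(x)+\vint{B(x,\eps)}\M_{\sigma\eps}g(y)\,dy\Bigr],
\]
and integrating in $x$ leaves you needing $\|\M_{\sigma\eps}g\|_{L^1(\rn)}\le C\|g\|_{L^1(\rn)}$. This is false: the obstruction is not the truncation of the radii but the supremum over them. Each single average $x\mapsto\vint{B(x,r)}|g|$ indeed has $L^1$-norm $\|g\|_{L^1}$ by Fubini, but the supremum over $0<r\le\sigma\eps$ does not; by Stein's $L\log L$ theorem, $\M_R g$ fails to be even locally integrable whenever $g\in L^1$ is not locally in $L\log L$, so your right-hand side can be identically $+\infty$ for every $\eps>0$. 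No covering or Fubini argument exploiting the bounded radius can repair this, because the failure is local. The actual proof in \cite{Hj3} (reproduced in metric form in Theorem \ref{poincare from pointwise} of this paper) uses only the weak-type $(1,1)$ estimate for $\M$: inside a ball $B$ one sets $E_k=\{\M g\le 2^k\}$, observes that \eqref{w11} makes $u$ Lipschitz with constant $\sim 2^k$ on $E_k$, uses the weak-type bound $|B\setminus E_{k-1}|\le C2^{-k}\|g\|_{L^1}$ to find, for each $x\in E_k$, a nearby point of $E_{k-1}$, and sums the resulting telescoping geometric series to obtain the Poincar\'e inequality $\vint{B}|u-u_B|\,dx\le Cr\vint{\sigma' B}g\,dx$; from this $u\in W^{1,1}(\rn)$ follows, with the upgrade from $BV$ to $W^{1,1}$ coming, as you correctly anticipate, from the fact that the controlling density $g$ is an $L^1$ function rather than a measure. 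You should replace the mollification step by this level-set decomposition.
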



In the metric setting, we can characterize Newtonian functions by a similar pointwise inequality, provided the space supports the $(1,p)$-Poincar\'e inequality \eqref{eq: poincare}.  Recall that Newtonian spaces are a generalization of Sobolev spaces to metric spaces using upper gradients, see \cite{S}. For any $p>0$, a $(1,p)$-Poincar\'e inequality for a pair 
$u\in\lloc{X}$ and a measurable function $g\ge0$ implies, using a standard chaining argument, a pointwise inequality of the same
type as \eqref{w11},
\begin{equation}\label{pointwise p-poincare}
|u(x)-u(y)|\le C\dist(x,y)  
 \Bigl[\bigl( \M_{2\tau\dist(x,y)}g^p(x)\bigr)^{1/p}
 +\bigl( \M_{2\tau\dist(x,y)}g^p(y)\bigr)^{1/p}\Bigr]
\end{equation}
for $\mu$-almost all $x,y\in X$, see \cite[Theorem 3.2]{HjK}.
A converse holds when $p>s/(s+1)$, where $s$ is the doubling dimension: if the pair $u,g\in L^p(X)$ satisfies the  pointwise inequality \eqref{pointwise p-poincare}, it also satisfies a $(1,p)$-Poincar\'e inequality, see \cite[Theorem 9.5]{Hj2}. Furthermore, if $p \ge 1$ and $X$ is complete, then according to \cite[Theorem 11.2]{Hj2}, it follows that $u$ belongs to the Newtonian space $N^{1,p}(X)$. Thus the pointwise inequality \eqref{pointwise p-poincare} for $u,g\in L^p(X)$ characterizes the space $N^{1,p}(X)$ for any $1 \le p < \infty$.


For $BV$ functions, Poincar\'e inequality \eqref{eq: poincareBV} and the same proof as in 
\cite[Theorem 3.2]{HjK} give a similar estimate as \eqref{pointwise p-poincare} for the oscillation 
of a function. Namely, if $u\in\BV{X}$, then for $\mu$-almost all $x,y\in X$,
\begin{equation}\label{pointwise from poincare}
|u(x)-u(y)|\le C\dist(x,y)  
\big[\M_{2\tau\dist(x,y),\|Du\|}(x)+\M_{2\tau\dist(x,y),\|Du\|}(y)\big],
\end{equation}
where the constant $C>0$ depends only on the doubling constant $c_d$ and on the constants of the Poincar\'e inequality. Here $\M_{2\tau d,\|Du\|}$ is the restricted maximal function 
\eqref{maximal function measure} of the measure $\|Du\|$. 
In Theorem \ref{poincare from pointwise}, we show that a similar pointwise inequality \eqref{pointwise} with a maximal function of a measure implies a Poincar\'e type inequality \eqref{nu poincare} with the same measure on the right hand side. This, together with Theorem 
\ref{char miranda} by Miranda,  shows that $u$ is a function of bounded variation.

\begin{theorem}[{\cite[Theorem 3.8]{Mi}}]\label{char miranda}
Let $X$ be a complete, doubling metric measure space that supports a $(1,1)$-Poincar\'e inequality. 
Let $u\in L^1(X)$. 
Then $u\in BV(X)$ if and only if there exist constants $C_{1}>0$ and $\eta >0$ and a positive, finite measure 
$\nu$ such that
\[
\int_{B}|u-u_{B}|\,d\mu\le C_{1}r\nu(\eta B)
\]
for each ball $B(x,r)$. Moreover, $\|Du\|\le C\nu$, with $C=C(C_{1}, c_{d},\eta)$.
\end{theorem}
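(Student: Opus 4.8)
The plan is to prove the two implications separately, taking $\nu=\|Du\|$ in the forward direction and constructing the energy bound through a discrete convolution approximation in the converse. Suppose first that $u\in\BV{X}$, so that $\|Du\|$ is a finite Radon measure. The $(1,1)$-Poincar\'e inequality \eqref{eq: poincareBV} for $BV$ functions gives, for every ball $B=B(x,r)$,
\[
\vint{B}|u-u_{B}|\,d\mu\le Cr\,\frac{\|Du\|(\lambda B)}{\mu(\lambda B)}.
\]
Multiplying by $\mu(B)$ and using $\mu(B)\le\mu(\lambda B)$ yields $\int_{B}|u-u_{B}|\,d\mu\le Cr\,\|Du\|(\lambda B)$, which is exactly the claimed inequality with $\nu=\|Du\|$, $C_{1}=C$ and $\eta=\lambda$; in particular $\|Du\|=\nu\le C\nu$.

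For the converse, assume the inequality holds for some positive finite measure $\nu$ and constants $C_{1},\eta$. I would approximate $u$ in $\lloc{X}$ by discrete convolutions and control their energy by $\nu$, then invoke the relaxation definition of $\|Du\|$ via local Lipschitz constants (Ambrosio--Miranda). Fix a small scale $r>0$, choose a maximal $r$-separated net $\{x_{j}\}$, set $B_{j}=B(x_{j},r)$, and let $\{\ph_{j}\}$ be a Lipschitz partition of unity subordinate to $\{2B_{j}\}$ with each $\ph_{j}$ being $C/r$-Lipschitz and $\sum_{j}\ph_{j}\equiv1$; by the doubling property the relevant dilated balls have bounded overlap $N=N(c_{d})$. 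Put $u_{r}=\sum_{j}u_{B_{j}}\ph_{j}$. A Lebesgue-point argument shows $u_{r}\to u$ in $\lloc{X}$ as $r\to0$.

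The heart of the matter, and the step I expect to be the main obstacle, is the energy estimate. For $x\in B_{i}$ only boundedly many $\ph_{j}$ are nonzero nearby, and since $\sum_{j}\ph_{j}\equiv1$ the local Lipschitz constant satisfies $\operatorname{lip}u_{r}(x)\le (C/r)\max_{j}|u_{B_{j}}-u_{B_{i}}|$, the maximum taken over the neighbors of $B_{i}$. Each such neighbor satisfies $B_{j}\subset CB_{i}$, so telescoping through $u_{CB_{i}}$, doubling, and the hypothesis applied to $CB_{i}$ give
\[
|u_{B_{j}}-u_{B_{i}}|\le\frac{C}{\mu(B_{i})}\int_{CB_{i}}|u-u_{CB_{i}}|\,d\mu\le\frac{Cr}{\mu(B_{i})}\,\nu(\eta CB_{i}).
\]
Hence $\operatorname{lip}u_{r}(x)\le C\nu(\eta CB_{i})/\mu(B_{i})$ on $B_{i}$, and integrating over $B_{i}$, summing over $i$, and using that $\{B_{i}\}$ cover $X$ while the dilates $\{\eta CB_{i}\}$ have bounded overlap,
\[
\int_{X}\operatorname{lip}u_{r}\,d\mu\le C\sum_{i}\nu(\eta CB_{i})\le CN\,\nu(X).
\]
By the lower semicontinuity built into the relaxation definition, letting $r\to0$ gives $\|Du\|(X)\le\liminf_{r\to0}\int_{X}\operatorname{lip}u_{r}\,d\mu\le C\nu(X)<\infty$, so $u\in\BV{X}$.

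Finally, to upgrade the total-mass bound to the measure inequality $\|Du\|\le C\nu$, I would localize the same convolution estimate to an arbitrary open set $\Omega$, obtaining $\|Du\|(\Omega)\le C\nu(\Omega')$ for a slightly enlarged open set $\Omega'\supset\eta\Omega$, and then let $\Omega'\downarrow$ and invoke outer regularity of the Radon measures $\|Du\|$ and $\nu$. The recurring technical difficulty is keeping every overlap constant in the energy estimate independent of the scale $r$; this is precisely what the doubling of $\mu$ guarantees, and it is also what forces the dilation $\eta$ (rather than $B$ itself) on the right-hand side.
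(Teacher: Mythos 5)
The paper does not actually prove this statement---it is imported verbatim as \cite[Theorem 3.8]{Mi}---so there is no in-paper argument to compare against; your proof is correct and follows the same route as Miranda's original one. The forward direction is exactly the $BV$ Poincar\'e inequality \eqref{eq: poincareBV} with $\nu=\|Du\|$, and the converse is the standard discrete-convolution energy estimate (bounded overlap from the $r$-separated net, telescoping through $u_{CB_i}$, lower semicontinuity built into the relaxation definition). The one ingredient your sketch implicitly leans on from the cited source is that $\|Du\|(\cdot)$, defined on open sets by relaxation, extends to a Borel measure (Miranda's Theorem 3.4), which is what makes the final localization and regularity step for $\|Du\|\le C\nu$ legitimate.
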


We obtain our characterization by combining \eqref{pointwise from poincare}, Theorem 
\ref{poincare from pointwise}, and Theorem \ref{char miranda}. Although the characterization is new even in Euclidean spaces, we formulate it only in the general metric space setting. 
\begin{theorem}\label{char pointwise}
Let $X$ be a complete, 
doubling metric measure space that supports 
a $(1,1)$-Poincar\'e inequality. Let $u\in L^1(X)$.  
Then $u\in BV(X)$ if and only if there exists a positive, finite measure $\nu$ and 
constants $\sigma\ge1$ and $C_{0}>0$ such that the inequality 
\[
|u(x)-u(y)|\le C_{0}\dist(x,y)  
\big[\M_{\sigma\dist(x,y),\nu}(x) + \M_{\sigma\dist(x,y),\nu}(y)\big]
\]
holds for $\mu$-almost all $x,y\in X$. Moreover, $\|Du\|\le C\nu$, where $C$ only depends on $C_{0}$, $\sigma$, the doubling constant of the measure, and the constants in the $(1,1)$-Poincar\'e inequality.
\end{theorem}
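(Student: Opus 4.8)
The plan is to establish the two implications separately, in each direction reducing the claim to a result already available, so that the theorem becomes an assembly of \eqref{pointwise from poincare}, Theorem~\ref{poincare from pointwise}, and Theorem~\ref{char miranda}.

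For the necessity, suppose $u\in\BV{X}$. Then $\|Du\|$ is a positive, finite measure, and I would simply quote \eqref{pointwise from poincare}: taking $\nu=\|Du\|$, $\sigma=2\tau$, and $C_{0}=C$ (the constant appearing there), the required pointwise inequality holds for $\mu$-almost every pair $x,y\in X$. With this choice the bound $\|Du\|\le C\nu$ in the ``moreover'' part is trivial. This direction presents no difficulty beyond recalling that the chaining argument behind \eqref{pointwise from poincare} applies to every $\BV{X}$ function.

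For the sufficiency, I would argue in two steps. First, starting from the assumed pointwise inequality with measure $\nu$ and constants $\sigma,C_{0}$, I apply Theorem~\ref{poincare from pointwise} to produce the Poincar\'e type inequality \eqref{nu poincare}, in which the \emph{same} measure $\nu$ appears on the right-hand side; concretely this delivers a bound of the form $\int_{B}|u-u_{B}|\,d\mu\le C_{1}r\,\nu(\eta B)$ for every ball $B=B(x,r)$, with $C_{1}$ and $\eta$ depending only on $C_{0}$, $\sigma$, the doubling constant $c_{d}$, and the constants of the Poincar\'e inequality. Second, this is precisely the hypothesis of Miranda's characterization, so Theorem~\ref{char miranda} yields $u\in\BV{X}$ together with $\|Du\|\le C\nu$, where $C=C(C_{1},c_{d},\eta)$; composing the two constant dependencies gives the stated dependence of $C$ on $C_{0}$, $\sigma$, $c_{d}$, and the Poincar\'e constants.

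The genuine analytic content sits entirely inside Theorem~\ref{poincare from pointwise}, and so, granting that theorem, the main obstacle in the assembly above is bookkeeping: I must check that the measure $\nu$ and the restricted maximal functions $\M_{\sigma\dist(x,y),\nu}$ in the hypothesis are exactly the objects fed to Theorem~\ref{poincare from pointwise}, and that the output inequality \eqref{nu poincare} it produces is literally of the form $\int_{B}|u-u_{B}|\,d\mu\le C_{1}r\,\nu(\eta B)$ demanded by Theorem~\ref{char miranda}. Should the dilation factor $\eta$ arising from the chaining differ from the radius restriction $\sigma$ in the maximal function, I would absorb the discrepancy into the doubling constant via finitely many applications of doubling. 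Once these identifications are in place the two theorems compose with no further analysis.
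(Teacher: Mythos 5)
Your proposal is correct and matches the paper's own argument exactly: the paper proves this theorem precisely by combining \eqref{pointwise from poincare} (for necessity, with $\nu=\|Du\|$), Theorem~\ref{poincare from pointwise}, and Theorem~\ref{char miranda} (for sufficiency and the bound $\|Du\|\le C\nu$). No further commentary is needed.
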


\section{\label{preliminaries}Notation and preliminaries}

We assume that $X=(X,\dist,\mu)$ is a metric measure space
equipped with a metric $\dist$ and a Borel regular, {\it doubling} outer
measure $\mu$. The doubling property means that there is a fixed constant
$c_d>0$, called the {\it doubling constant} of $\mu$, such that 
\begin{equation}\label{doubling measure}
\mu(2B)\le c_d\mu(B)
\end{equation}
for every ball $B=B(x,r)=\{y\in X:\dist(y,x)<r\}$. Here $tB=B(x,tr)$. 
We assume that the measure of every open set is positive and
that the measure of each bounded set is finite. 
The doubling condition gives an upper bound for the dimension of $X$.
By this we mean that there is a constant $C=C(c_d)>0$ and an exponent $s\geq 0$ such that 
\begin{equation}\label{eq: d dim}
\frac{\mu(B(y,r))}{\mu(B(x,R))}\ge C\Bigl(\frac rR\Bigr)^s
\end{equation}
whenever $0<r\le R<\diam (X)$, $x\in X$, and $y\in B(x,R)$. Inequality
\eqref{eq: d dim} holds certainly with $s=\log_2c_d$ (but it may hold for some smaller exponents as well). We call $s$ the doubling dimension of $X$.

We also assume that $X$ is complete; recall that a metric space with a doubling measure is complete 
if and only if the space is proper, that is, closed and bounded sets are compact. When we say that an inequality such as \eqref{pointwise p-poincare} holds for $\mu$-almost all $x,y\in X$, we mean that there is a set $E \subset X$ such that the property holds for all $x,y\in X \setminus E$, and $\mu(E)=0$. 

The restricted Hardy-Littlewood maximal function of a locally integrable function
$u$ is 
\begin{equation}\label{maximal function}
\M_Ru(x)=\sup_{0<r\le R}\,\vint{B(x,r)}|u(y)|\,d\mu(y),
\end{equation}
where $u_B=\vint{B}u\,d\mu=\mu(B)^{-1}\int_Bu\,d\mu$ is the integral average of $u$ over $B$.     
For $R=\infty$, $\M_{\infty}u$ is the usual Hardy-Littlewood maximal
function $\M u$.

Similarly, the restricted maximal function of a positive, finite measure $\nu$ is 
\begin{equation}\label{maximal function measure}
\M_{R,\nu}(x)=\sup_{0<r\le R}\,\frac{\nu(B(x,r))}{\mu(B(x,r))}.  
\end{equation}
For $R=\infty$, we write $\M_{\nu}$.

A curve is a rectifiable continuous mapping from a compact interval to $X$. 
A nonnegative Borel function $g$ on $X$ is an upper gradient 
of an extended real valued function $u$
on $X$, if for all curves $\gamma$ in $X$, we have
\begin{equation} \label{ug-cond}
|u(x)-u(y)|\le \int_\gamma g\,ds,
\end{equation}
whenever both $u(x)$ and $u(y)$ are finite, and 
$\int_\gamma g\, ds=\infty $ otherwise.
Here $x$ and $y$ are the end points of $\gamma$.
If $g$ is a nonnegative measurable function on $X$
and \eqref{ug-cond} holds for almost every curve with respect to the 1-modulus,
then $g$ is a $1$-weak upper gradient of~$u$. For the concept of modulus in metric spaces, see \cite{HeKo}.
A natural upper gradient for a Lipschitz function $u$ is the local Lipschitz constant 
\begin{equation}\label{lip}
\operatorname{Lip}u(x)=\liminf_{r\to 0}\sup_{y\in B(x,r)}\frac{|u(x)-u(y)|}{\dist(x,y)}.
\end{equation}
Next we recall the definition of functions
of bounded variation on metric spaces, given by Miranda in \cite{Mi}. 

\begin{definition}
For $u\in L^1_{\text{loc}}(X)$, we define
\[
\|Du\|(X)
=\inf\Big\{
\liminf_{i\to\infty}\int_X\operatorname{Lip}u_i\,d\mu: u_i\in \text{Lip}_{\text{loc}}(X),
u_i\to u\text{ in } L^1_{\text{loc}}(X)\Big\},
\]
and we say that a function $u\in L^1(X)$ is of bounded variation, 
$u\in BV(X)$, if $\|Du\|(X)<\infty$. Note that replacing the Lipschitz constants with $1$-weak upper gradients in the definition yields the same space.
\end{definition}
 
We say that $X$ supports a (weak) $(1,p)$-Poincar\'e inequality, $0< p<\infty$,
if there exist constants $c_P>0$ and $\tau \ge 1$ such that for all
balls $B=B(x,r)$, all locally integrable functions $u$,
and all $p$-weak upper gradients $g$ of $u$, we have 
\begin{equation}\label{eq: poincare}
\vint B |u-u_B|\, d\mu 
\le c_P r\Big( \,\vint{\tau B}g^p\,d\mu\Big)^{1/p}.
\end{equation}
If the space supports a $(1,1)$-Poincar\'e inequality, then for
every $u\in BV(X)$ we have
\begin{equation}\label{eq: poincareBV}
\vint B |u-u_B|\, d\mu 
\le c_P r\frac{\|Du\|(\tau B)}{\mu(\tau B)},
\end{equation}
where the constant $c_P$ and the dilation factor $\tau$ are the same as in \eqref{eq: poincare}. Inequality \eqref{eq: poincareBV} follows easily by using \eqref{eq: poincare} for approximating Lipschitz functions in the definition of $BV(X)$.

The characteristic function of a set $E\subset X$ is $\ch{E}$. Both the Euclidean distance and the Lebesgue measure in $\rn$ are denoted by $|\cdot|$. 
In general, $C$ will denote a positive constant whose value is not
necessarily the same at each occurrence.

\section{\label{section pointwise}Pointwise estimate and Poincar\'e
  inequality}
We begin with a geometric lemma.  
Recall that $X$ is {\it a geodesic space} if every two points $x, y\in X$ can be  joined by a curve whose length is equal to $\dist(x,y)$. 

\begin{lemma}\label{small ball inside}
Let $X$ be a geodesic metric space. 
If $B(x_0,R)$ is a ball, $x\in B(x_{0},R)$, and $0<r\le2R$, 
then there is a ball of radius $r/2$ in $B(x,r)\cap B(x_{0},R)$.  
\end{lemma}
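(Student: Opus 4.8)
The plan is to prove the geometric Lemma \ref{small ball inside} by explicitly constructing a suitable center point along a geodesic. Since $X$ is geodesic, I can connect $x$ and $x_0$ by a length-minimizing curve $\gamma$, and the idea is to move a controlled distance from $x$ toward $x_0$ (or simply toward a nearby point if $x$ is already well inside the larger ball) and place a ball of radius $r/2$ there. The two constraints I must simultaneously satisfy are that the new ball lies inside $B(x,r)$ and inside $B(x_0,R)$, so I need to choose the center $z$ so that its distance to $x$ is at most $r/2$ and its distance to $x_0$ is at most $R-r/2$.

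First I would treat the parameter constraints carefully. We are given $x\in B(x_0,R)$, so $\dist(x,x_0)<R$, and $0<r\le 2R$. If $r/2\le R-\dist(x,x_0)$, then the ball $B(x,r/2)$ already works, because any point within $r/2$ of $x$ is within $r\le\dist(x,x_0)+\ldots$, hence inside both balls, so I would take $z=x$. The interesting case is when $\dist(x,x_0)$ is close to $R$, i.e.\ $x$ is near the boundary of $B(x_0,R)$. In that case I would parametrize the geodesic $\gamma\colon[0,\dist(x,x_0)]\to X$ from $x$ to $x_0$ and pick $z=\gamma(t)$ for a suitable $t$. Along a geodesic one has $\dist(x,z)=t$ and $\dist(z,x_0)=\dist(x,x_0)-t$, which gives me exact control over both distances as $t$ varies.

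The concrete choice is to set $z=\gamma(t)$ with $t$ chosen so that $B(z,r/2)$ slides just inside $B(x_0,R)$: I would take $t$ small enough that $\dist(x,z)=t\le r/2$ (keeping the small ball inside $B(x,r)$) while simultaneously $\dist(z,x_0)=\dist(x,x_0)-t\le R-r/2$ (keeping it inside $B(x_0,R)$). Combining these, I need a value $t$ with
\[
\dist(x,x_0)-R+\tfrac r2\le t\le \tfrac r2.
\]
Such a $t$ in $[0,\dist(x,x_0)]$ exists precisely because the condition $r\le 2R$ guarantees $\dist(x,x_0)-R+r/2\le \dist(x,x_0)-R+R=\dist(x,x_0)$ and also $\le r/2$; one checks the interval is nonempty using $\dist(x,x_0)<R$ together with $r\le 2R$. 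Then by the triangle inequality every point $w\in B(z,r/2)$ satisfies $\dist(w,x)\le \dist(w,z)+\dist(z,x)<r/2+r/2=r$ and $\dist(w,x_0)\le\dist(w,z)+\dist(z,x_0)<r/2+(R-r/2)=R$, so $B(z,r/2)\subset B(x,r)\cap B(x_0,R)$.

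The main obstacle is purely bookkeeping rather than conceptual: verifying in each regime that the admissible interval for $t$ is nonempty and lies within the geodesic's parameter range $[0,\dist(x,x_0)]$, handling the boundary cases where $\dist(x,x_0)+r/2$ may exceed or fall short of $R$. The condition $r\le 2R$ is exactly what prevents the required displacement from being too large for the geodesic to accommodate, so I would verify carefully that this hypothesis is used in confirming $t\ge 0$ is achievable. Once the interval is shown nonempty, the triangle-inequality verification is immediate and the proof concludes.
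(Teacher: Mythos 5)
Your proposal is correct and follows essentially the same route as the paper: both pick the center of the small ball as a point on the geodesic from $x$ to $x_0$ at a suitably controlled distance and finish with the triangle inequality, the only difference being that the paper fixes the displacement to be exactly $r/2$ (or takes $x_0$ itself when $\dist(x,x_0)<r/2$) while you determine the whole admissible interval of displacements and check it is nonempty using $r\le 2R$ and $\dist(x,x_0)<R$.
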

\begin{proof}
If $\dist(x,x_0)\ge r/2$, then the assumption that $X$ is geodesic
implies that there is a point $z$ such that $\dist(z,x)=r/2$
and $\dist(z,x_0)=\dist(x,x_0)-r/2$, and hence 
$B(z,r/2)\subset B(x,r)\cap B(x_{0},R)$.

On the other hand, if $\dist(x,x_0)<r/2$, then 
$B(x_0,r/2)\subset B(x,r)\cap B(x_{0},R)$.
\end{proof}

The idea of the proof of the next theorem is from \cite{Hj2} and \cite{Hj3}. 
\begin{theorem}\label{poincare from pointwise}
Let $X$ be a complete, doubling metric measure space that supports 
a $(1,1)$-Poincar\'e inequality. 
Let $u\in\lloc{X}$,  and  let $\nu$ be a positive, finite measure. 
If there are constants $\sigma\ge1$ and $C_{0}>0$ such that the inequality 
\begin{equation}\label{pointwise} 
|u(x)-u(y)|\le C_{0}\dist(x,y)  
\big[\M_{\sigma\dist(x,y),\nu}(x) + \M_{\sigma\dist(x,y),\nu}(y)\big]
\end{equation}
holds for $\mu$-almost all $x,y\in X$, then 
\begin{equation}\label{nu poincare}
 \int_{B}|u-u_{B}|\,d\mu\le Cr\nu(\eta B)
\end{equation}
for each ball $B=B(x,r)$. The constants $C$ and $\eta$ depend only on $C_{0}$, $\sigma$, 
the doubling constant of the measure, and the constants in the $(1,1)$-Poincar\'e inequality.
\end{theorem}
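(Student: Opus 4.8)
The plan is to fix a ball $B=B(x_0,r)$ and estimate the left-hand side of \eqref{nu poincare} directly from the pointwise hypothesis \eqref{pointwise}, the whole difficulty being that the maximal function on its right-hand side is only of weak type $(1,1)$. First I would reduce to the case where $X$ is geodesic: a complete doubling space supporting a $(1,1)$-Poincar\'e inequality is quasiconvex, so after replacing $\dist$ by a biLipschitz-equivalent geodesic metric (which preserves all the hypotheses and only alters the constants $\sigma$, $C_0$, $c_d$, $c_P$, $\tau$ and the dilation $\eta$) Lemma \ref{small ball inside} becomes available. Writing $g=\M_{2\sigma r,\nu}$ and using that \eqref{maximal function measure} is nondecreasing in the radius, the hypothesis \eqref{pointwise} gives, for $\mu$-almost all $x,y\in B$,
\[
|u(x)-u(y)|\le 2C_0 r\,[g(x)+g(y)],
\]
since $\dist(x,y)<2r$ forces $\M_{\sigma\dist(x,y),\nu}\le g$. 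A routine $5r$-covering argument, summing the masses of disjoint balls $B(x_i,\rho_i)\subset\eta B$ on which $\nu(B(x_i,\rho_i))>t\mu(B(x_i,\rho_i))$, yields the weak-type bound
\[
\mu(\{x\in B:g(x)>t\})\le\frac{C}{t}\,\nu(\eta B),\qquad \eta=1+2\sigma,
\]
with $C=C(c_d)$.

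The naive continuation, namely bounding $\int_B|u-u_B|\,d\mu\le 4C_0 r\int_B g\,d\mu$ by the elementary double-integral estimate, fails, and this is precisely the crux of the theorem. The function $g$ need not lie in $L^1(B)$: already for $X=\re$, $\nu=\delta_0$ and $u=\ch{[0,\infty)}$ one checks that \eqref{pointwise} holds yet $\int_B g\,d\mu=\infty$, while the target $r\,\nu(\eta B)$ is finite. Moreover, replacing the $L^1$-bound by the weak-type estimate above and integrating via the layer-cake formula loses a logarithmic factor. The point is therefore to recover the sharp right-hand side $\nu(\eta B)$, which must come from summing the $\nu$-masses of a disjoint (or boundedly overlapping) family of balls, whose masses telescope to at most $\nu(\eta B)$, rather than from integrating the density $g$.

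Accordingly I would run a Calder\'on--Zygmund / Whitney stopping decomposition of $B$ at the height $\lambda=\nu(\eta B)/\mu(B)$. Selecting a boundedly overlapping family of stopping balls $B_i=B(z_i,r_i)$, $z_i\in B$, on which the $\nu$-density first exceeds $\lambda$, the stopping condition together with the overlap bound gives $\sum_i\nu(B_i)\le C\nu(\eta B)$ and $\sum_i\mu(B_i)\le C\mu(B)$, while on the good set $G=B\setminus\bigcup_i cB_i$ one has $g\le C\lambda$; by the displayed oscillation bound, $u$ then varies by at most $Cr\lambda$ on $G$, so
\[
\int_G|u-a|\,d\mu\le Cr\lambda\,\mu(B)=Cr\,\nu(\eta B),
\]
where $a$ is a value of $u$ at a good point (legitimate, since \eqref{pointwise} holds off a single $\mu$-null set). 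On each stopping ball I would estimate $\int_{cB_i}|u-u_{cB_i}|\,d\mu$ by applying \eqref{nu poincare} at the smaller scale $r_i$ and control the telescoping differences $|u_{cB_i}-u_B|$ through a chain of balls inside $B$; here Lemma \ref{small ball inside} guarantees that each ball of the chain meets $B$ in a set of comparable measure, so the successive averages are comparable. Summing and using $r_i\le Cr$ then gives $\sum_i r_i\,\nu(cB_i)\le Cr\,\nu(\eta B)$.

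The main obstacle is exactly this last step: the per-ball estimate is the conclusion of the theorem one scale down, so the decomposition must either be iterated across all scales with the contributions summed through the disjointness of the stopping balls, or be closed off by a limiting argument exploiting $u\in\lloc{X}$ to ensure a priori finiteness of the iterated quantities. Making this multiscale summation converge without reintroducing the lost logarithm, that is, genuinely extracting $\nu(\eta B)$ from telescoped disjoint masses rather than from $\int_B g\,d\mu$, is where the real work lies and where the ideas of \cite{Hj2} and \cite{Hj3} enter.
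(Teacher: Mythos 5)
Your reduction to the geodesic case, the restriction of the hypothesis to the ball, the weak-type bound for the maximal function, and the diagnosis that the naive $L^1$ and layer-cake estimates fail are all correct and agree with the paper. But the proof stops exactly where the theorem's content lies, and the route you propose to close it does not converge as described. If you iterate the Calder\'on--Zygmund decomposition across scales, the generation-$n$ bad balls contribute $\sum_{|I|=n} r_I\,\nu(\eta B_I)$; bounded overlap controls each generation by $Cr\,\nu(\eta B)$, but nothing forces the radii $r_I$ to decay geometrically in $n$ (a stopping ball at height $\lambda=\nu(\eta B)/\mu(B)$ only satisfies $\mu(B_I)<\mu(B)$, not $r_I\le \theta r$ with $\theta<1$), so the sum over generations diverges. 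Equivalently, decomposing at all heights $2^k\lambda$ gives terms of size $2^k\lambda\,\mu(\{\M_\nu>2^k\lambda\}\cap B)$, each of which the weak-type estimate bounds only by $C\nu(\eta B)$: this is precisely the logarithm you set out to avoid, reappearing.

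The mechanism the paper uses is different and is the missing idea. Set $\tau=3\sigma$, $\lambda=\nu\vert_{\tau B}$, and consider the level sets $E_k=\{x\in B:\M_\lambda(x)\le 2^k\}$ with $a_k=\sup_{E_k}|u|$. The pointwise hypothesis \eqref{pointwise} makes $u$ Lipschitz with constant $C_02^{k+1}$ on $E_k$ (Haj\l asz truncation). The decisive quantitative input is \emph{spatial}: combining the weak-type bound $\mu(B\setminus E_{k-1})\le C2^{-k}\lambda(\tau B)$ with the lower mass bound $\mu(B(x,r)\cap B)\ge c\,\mu(B)(r/R)^s$ furnished by Lemma \ref{small ball inside}, one finds that every $x\in E_k$ has a point of $E_{k-1}$ within distance
\[
r_k\approx R\Big(\frac{\lambda(\tau B)}{2^{k}\mu(B)}\Big)^{1/s}.
\]
Hence $a_k\le a_{k-1}+C2^{k}r_k$ with $2^{k}r_k\approx R\,2^{k(1-1/s)}\big(\lambda(\tau B)/\mu(B)\big)^{1/s}$, and since $\mu(E_k\setminus E_{k-1})\le C2^{-k}\lambda(\tau B)$, the series $\sum_k a_k\,\mu(E_k\setminus E_{k-1})$ bounding $\int_B|u-u_B|\,d\mu$ has terms of order $2^{-k/s}$ and converges geometrically, with sum $CR\,\nu(\tau B)$ and no logarithmic loss. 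In other words, the gain does not come from telescoping disjoint $\nu$-masses of stopping balls, but from the fact that the oscillation of $u$ on $\{\M_\lambda>2^k\}$ grows only like $2^{k(1-1/s)}$, strictly slower than $2^k$, because points where the maximal function is large are quantitatively close to points where it is smaller. Without this (or an equivalent device), your argument remains a correct setup with the central step unproved.
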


\begin{proof}
Since $X$ supports a $(1,1)$-Poincar\'e inequality, $X$ is
quasiconvex. This means that there is a constant $C \ge 1$, depending only on the doubling constant $c_d$ and the constants in the Poincar\'e inequality, such that every two points
$x,y\in X$ can be connected by a curve $\gamma$ satisfying
$\ell(\gamma) \le C \dist (x,y)$, where $\ell(\gamma)$ is the length of $\gamma$, 
see \cite[Proposition~4.4]{HjK}. 
This implies that $X$ endowed with the length metric 
$\rho(x,y)= \inf \ell(\gamma)$,
where the infimum is taken over all curves connecting $x$
and $y$, is bi-Lipschitz homeomorphic to $X$.
Since the doubling property, the $(1,1)$-Poincar\'e inequality and inequalities  \eqref{pointwise} and \eqref{nu poincare} are invariant under bi-Lipschitz homeomorphisms (cf.\ \cite[Chapter~9]{He}), we may replace the metric $\dist$ by $\rho$ and work with the new space $(X,\rho,\mu)$.
Therefore, throughout the proof we will assume that $\dist$ is
the length metric. Since $X$ is proper, such a metric has the property that every two points
$x,y\in X$ can be connected by a geodesic, that is, a curve whose
length equals $\dist(x,y)$, see \cite[Theorem~3.9]{Hj2}.

Let $B=B(x_0,R)$ be a ball.
We begin the proof by checking what we can assume from $u$ and
$\nu$. Since neither inequality  \eqref{pointwise} nor inequality \eqref{nu poincare} 
change if a constant is added to $u$, we may assume that $\operatorname{ess} \inf_E|u|=0$
for a set $E\subset B$ with $\mu(E)>0$. We will choose the set $E$ later. 

We define $\tau=3\sigma$, and $\lambda=\nu\vert_{\tau B}$. The pointwise estimate \eqref{pointwise} implies that,
\begin{equation}\label{pointwise in B}
|u(x)-u(y)|\le C_0\dist(x,y)  
\big[\M_{\lambda}(x)+\M_{\lambda}(y)\big]
\end{equation}
for almost all $x,y\in B$. We may assume that \eqref{pointwise in B} holds for all $x,y\in B$ because inequality \eqref{nu poincare} with $B$ replaced by $B\setminus F$, where $\mu(F)=0$, implies \eqref{nu poincare} with $B$.

Moreover, we may assume that $\lambda(\tau B)>0$, since otherwise $u$ is constant in $B$, and inequality \eqref{nu poincare} follows.
For each $k\in\z$, we define
\[
E_k=\big\{x\in B:\M_\lambda(x)\le2^k\big\} 
\quad\text{and}\quad a_k=\sup_{E_k}|u(x)|.
\]
Then $E_{k-1}\subset E_k$ and $a_{k-1}\le a_k$ for each $k$, and 
\begin{equation}\label{u and ak}
\int_B|u-u_B|\,d\mu\le2\int_B|u|\,d\mu
\le2\sum_{k=-\infty}^{\infty}a_k\mu(E_k\setminus E_{k-1}).  
\end{equation}
We will obtain an upper bound for the right hand side of this inequality 
by estimating the values of $a_k$. By the pointwise estimate \eqref{pointwise in B}, the function
$u$ is $C_02^{k+1}$-Lipschitz in $E_k$. Hence, for each $x\in E_k$ and $y\in
E_{k-1}$, we have
\begin{equation}\label{u and ak point}
|u(x)|\le|u(x)-u(y)|+|u(y)|\le C_02^{k+1}\dist(x,y)+a_{k-1}.  
\end{equation}
Our next goal is to find for each $x\in E_k$ a point $y\in E_{k-1}$
such that the distance from $y$ to $x$ is sufficiently small.
Fix $x\in E_k$. By Lemma \ref{small ball inside}, $B(x,r)\cap B$
contains a ball $\tilde B$ of radius $r/2$ if $0<r\le 2R$, and hence, by the
doubling property of $\mu$,
\begin{equation}\label{small ball}
\mu(B(x,r)\cap B)\ge\mu(\tilde B)\ge\mu(B)c_d^{-2}\Big(\frac{r}{2R}\Big)^s, 
\end{equation}
where $s=\log_2c_d$. Since $s$ can always be replaced by a larger number, we can assume that $s>1$. 
We also have the weak type estimate
\begin{equation}\label{weak type}
\mu(B\setminus E_{k-1})
=\mu\big(\{x\in B:\M_\lambda(x)>2^{k-1}\}\big)
< \frac C{2^{k-1}}\lambda(\tau B).
\end{equation}
Thus, in order to obtain the inequality 
$\mu(B(x,r) \cap B) > \mu(B \setminus E_{k-1})$, 
it is sufficient to require that
\[
\mu(B)c_d^{-2}\Big(\frac{r}{2R}\Big)^s \ge \frac{C}{2^{k-1}}\lambda(\tau B),
\]
that is,
\[
r \ge 2R \Big( \frac{C \lambda(\tau B)}{2^{k-1}\mu(B)}  \Big)^{1/s}.
\]
Let us thus define for each $k \in \z$
\[
r_k := 2R \Big( \frac{C \lambda(\tau B)}{2^{k-1}\mu(B)}  \Big)^{1/s}.
\]
Now, since $\mu(B(x,r_{k}) \cap B) > \mu(B \setminus E_{k-1})$, there is a $y \in B(x,r_k) \cap E_{k-1}$. The definition of $r_k$ and \eqref{u and ak point} then imply that
\[
a_k \le a_{k-1}+C_02^{k+1}r_k
=a_{k-1}+C_02^{k+2}R \Big( \frac{C \lambda(\tau B)}{2^{k-1}\mu(B)}  \Big)^{1/s}.
\]
Iterating the above estimate starting from some $k_0\in\z$, we get
\begin{equation}\label{ak}
\begin{aligned}
a_k
&\le a_{k_0}+ \sum_{i=k_0+1}^{k}CR2^{i(1-1/s)}
\Big( \frac{\lambda(\tau B)}{\mu (B)}  \Big)^{1/s} \\
&\le a_{k_0}+CR\Big( \frac{\lambda(\tau B)}{\mu (B)} \Big)^{1/s} \sum_{i=-\infty}^{k}2^{i(1-1/s)} \\
&\le a_{k_0}+CR\Big( \frac{\lambda(\tau B)}{\mu (B)}  \Big)^{1/s} 2^{k(1-1/s)}
\end{aligned}
\end{equation}
for each $k>k_0$. Note that $a_k \le a_{k_0}$ for each $k \le k_0$. 
Now, as mentioned before equation \eqref{small ball}, 
we must require that $r_{k_0+1} \le 2R$, that is,
\[
2R \Big( \frac{C\lambda(\tau B)}{2^{k_0}\mu(B)}  \Big)^{1/s} \le 2R,
\]
or equivalently
\[
\frac{C\lambda(\tau B)}{2^{k_0}} \le \mu(B).
\]
Let $k_0 \in \z$ be the smallest integer for which the above inequality holds. We then have
\[
2^{k_0} \ge \frac{C\lambda(\tau B)}{\mu(B)}
\]
and
\[
2^{k_0-1} < \frac{C\lambda(\tau B)}{\mu(B)}.
\]
By the doubling property of $\mu$ we thus have for some constant $C$
\begin{equation}\label{claim k0}
\frac{1}{C}\frac{\lambda(\tau B)}{\mu(\tau B)} \le 2^{k_0} \le C\frac{\lambda(\tau B)}{\mu (\tau B)}.
\end{equation}
We select the set $E$ discussed in the beginning of
the proof to be $E_{k_0}$. 
Note that $\mu(E_{k_0})>0$ because 
$\mu(B(x,r_{k_0+1})\cap B) > \mu(B \setminus E_{k_0})$ for any $x \in B$.
As mentioned in the beginning of the proof, we can now assume that 
$\operatorname{ess}\inf_{E_{k_0}}|u|=0$. 
Then, by the $C_02^{k_0+1}$-Lipschitz continuity of
$u$ in $E_{k_0}$, and \eqref{claim k0} we have the following estimate
for $a_{k_0}$:
\begin{equation}\label{ak0}
a_{k_0}=\sup_{E_{k_0}}|u|\le C_02^{k_0+1}\cdot2R\le 
CR\frac{\lambda(\tau B)}{\mu(\tau B)}.
\end{equation}
By writing $A_k=E_k\setminus E_{k-1}$ and using \eqref{u and ak} and \eqref{ak}, we  have  
\begin{align}
\frac12\int_B|u-u_B|\,d\mu
&\le\sum_{k=-\infty}^{\infty}a_k\mu(A_k)\nonumber\\
&\le \sum_{k=-\infty}^{k_0}a_{k_0}\mu(A_k)
+ \sum_{k=k_0+1}^{\infty}\Big(a_{k_0}+
 CR\Big(\frac{\lambda(\tau B)}{\mu(B)}\Big)^{1/s}2^{k(1-1/s)}\Big)\mu(A_k)\nonumber\\
&\le \sum_{k=-\infty}^{\infty}a_{k_0}\mu(A_k)
+ CR\Big(\frac{\lambda(\tau B)}{\mu(B)}\Big)^{1/s}\sum_{k=k_0+1}^{\infty} 2^{k(1-1/s)}\mu(B \setminus E_{k-1})\label{final},
\end{align}
where, by \eqref{ak0} and the doubling property of $\mu$,
\[
\sum_{k=-\infty}^{\infty}a_{k_0}\mu(A_k)
\le CR\frac{\lambda(\tau B)}{\mu(\tau B)}\mu(B)
\le CR\lambda(\tau B).
\]
Moreover, we estimate the last sum of \eqref{final} by using the weak type estimate \eqref{weak type} and \eqref{claim k0}:
\begin{align*}
\sum_{k=k_0+1}^{\infty} 2^{k(1-1/s)}\mu(B \setminus E_{k-1})
&\le \sum_{k=k_0+1}^{\infty}2^{k(1-1/s)}\frac{C\lambda(\tau B)}{2^{k-1}} \\
&\le C\lambda(\tau B)\sum_{k=k_0+1}^{\infty}2^{-k/s} \\
&\le C\lambda(\tau B)2^{-k_0/s} \\
&\le C\lambda(\tau B)\Big( \frac{\mu(\tau B)}{\lambda(\tau B)}  \Big)^{1/s}. \\
\end{align*}
Finally, using the doubling property of $\mu$, we get
\[
\frac12\int_B|u-u_B|\,d\mu \le CR\lambda(\tau B).
\]
Hence the claim follows --- we only have to note that the constants in the final form of the Poincar\'e type inequality will possibly be altered by the swaps between the metrics discussed in the beginning of the proof.
\end{proof}

\noindent \textbf{Acknowledgements.} The authors wish to thank Juha Kinnunen for helpful discussions and suggestions.

\end{document}